\newcommand{\R}{\mathbb{R}}
\DeclareMathOperator \conv {conv}
\begin{document}

\theoremstyle{plain}
\newtheorem{theorem}{Theorem}
\newtheorem{lemma}{Lemma}
\newtheorem{conjecture}{Conjecture}

\title{1-skeletons of the spanning tree problems with~additional constraints}

\author{Vladimir Bondarenko, Andrei Nikolaev, Dzhambolet Shovgenov}
\thanks {The research was partially supported by the Russian Foundation for Basic Research, Project 14-01-00333, and the President of Russian Federation Grant MK-5400.2015.1}

\address{%
Department of Discrete Analysis, P.G. Demidov Yaroslavl State University, Sovetskaya, 14, Yaroslavl, 150000, Russia
}
\email {bond@bond.edu.yar.ru, andrei.v.nikolaev@gmail.com, djsh92@mail.ru}

\begin{abstract}
We consider the polyhedral properties of two spanning tree problems with additional constraints. In the first problem, it is required to find a tree with a minimum sum of edge weights among all spanning trees with the number of leaves less or equal a given value. In the second problem, an additional constraint is the assumption that the degree of all vertices of the spanning tree does not exceed a given value. The decision versions of both problems are NP-complete.

We consider the polytopes of these problems and their 1-skeletons. We prove that in both cases it is a NP-complete problem to determine whether the vertices of 1-skeleton are adjacent. Although it is possible to obtain a superpolynomial lower bounds on the clique numbers of these graphs. These values characterize the time complexity in a broad class of algorithms based on linear comparisons. The results indicate a fundamental difference in combinatorial and geometric properties between the considered problems and the classical minimum spanning tree problem.
\end {abstract}

\keywords{spanning tree, 1-skeleton, clique number, NP-complete problem, Hamiltonian path.}

\maketitle


\section {Introduction}

A significant number of works related to the computational complexity of combinatorial problems is aimed at the study of geometric objects associated with problems.
Usually such objects are the polytopes of the problems and their 1-skeletons.
In particular, the clique number of the 1-skeleton (the size of the maximum clique) of the problem serves as a lower bound on computational complexity in a broad class of algorithms based on linear comparisons.
Moreover, it was found out that this characteristic is polynomial for known polynomially solvable problems and superpolynomial for intractable problems (see, for example, \cite {Bondarenko-DAN,Bondarenko-Maximenko,Bondarenko-Nikolaev}).

It is well-known that polynomially solvable problems may become NP-complete with the introduction of additional constraints.
Sometimes the opposite happens: the problem is NP-complete, however, the introduction of additional constraints makes it possible to design an effective algorithm for it.
In this regard, the question arises: how does the introduction of additional constraints affect the characteristics of the 1-skeleton of the problem?

We consider the combinatorial optimization problems on graphs that admit the following formulation: let $G=(V,E)$ be an edge-weighted graph and $T$ be some set of its subgraphs, it is required to find a subgraph of $T$, having a minimum (or maximum) possible edge weight.

\textbf{Minimum spanning tree (MST).}
In this classical problem it is required to find a spanning tree with a minimum edge weight in a connected graph $G$.

The problem is polynomially solvable, for example, by the algorithms of Prim and Kruskal \cite {Papa-St}.

\textbf{Leaf-constrained minimum spanning tree (LCMST)}.
Given a connected graph $G(V,E)$ and a positive integer $k < |V|$, it is required to find a spanning tree with a minimum edge weight such that $k$ or less vertices have degree $1$.

\textbf{Restricted-leaf-in-subgraph minimum spanning tree (RLSMST)}.
Let $G(V,E)$ be a connected graph, $U$ a vertex subset of $G$, and a positive integer $k < |U|$, it is required to find a spanning tree of $G$ with a minimum edge weight such that the number of leaves belonging to $U$ is less than or equal to $k$.

\textbf {Set version of leaf-constrained minimum spanning tree (SVMST)}.
For a connected graph $G(V,E)$ and some subset $U$ of its vertices it is required to find a spanning tree of $G$ with a minimum edge weight such that all leaves belong to the set $U$.

\textbf{Degree-constrained minimum spanning tree (DCMST)}.
Given a connected graph $G(V,E)$ and a positive integer $k < |V|$, it is required to find a spanning tree with a minimum edge weight in which no vertex has a degree larger than $k$.

Unlike the simple spanning tree problem, for all the above problems already the decision versions (does there exist at least one spanning tree in the graph $G$ that satisfies the additional constraints) are NP-complete.

A significant number of works are devoted to approximation algorithms for the spanning tree problems with restrictions on the number of leaves and degree of vertices\cite {Rahman, Fernandes, Goemans}. In particular, a linear 2-approximation algorithm for the dual problem of constructing a spanning tree with a maximum number of inner vertices \cite {Salamon}, and a polynomial time algorithm that returns a spanning tree with degrees of vertices at most $k+1$, and the edge weight not exceeding the weight of the optimal spanning tree with degrees of vertices at most $k$ \cite {Singh}.

\section {Polytope of the problem}

We consider the general problem described above on the graph $G=(V,E)$ with the set $T$ of its subgraphs.
Let $|V|=n$, we denote by $d$ the number of edges of the complete graph:
$$d=|E|=\frac{n(n-1)}{2}.$$ 
We consider the space $\R^{d}$ where the coordinates are associated with the edges of $G$.
For each element $t$ of $T$, we construct its characteristic vector $x=x(t)\in \R^{d}$ where some coordinate is equal to 1 if the corresponding edge belongs to $t$, otherwise the coordinate is equal to $0$.
We denote the set of all characteristic vectors by $X$.
Let $c\in \R^{d}$ be the vector composed of the edge weights of the graph $G$, then the problem is to find the maximum of a linear function $\left\langle c,x\right\rangle$ over the finite set $X$.

Let $P(X)$ be the polytope of the problem: $P(X) = \conv X$.
The skeleton of some polytope $P$ (also called $1$-skeleton) is the graph whose vertex set is the vertex set of $P$ (in this case it is $X$) and edge set is the set of 1-faces of $P$.
To study the skeleton of the polytope, the following assertion is useful (see, for example, \cite {Bron}).

\begin {lemma} \label {lemma_adjacency_separation}
Two vertices of the polytope $P$ are adjacent if and only if they are strictly separated from the rest of the vertices of $P$.
In other words, vertices $x$ and $y$ of the polytope $P$ are nonadjacent if and only if some convex combination of $x$ and $y$ coincides with a convex combination of the rest of the vertices: there exist $\alpha \geq 0, \beta \geq 0, \gamma_{z} \geq 0$ for which
\begin {gather*}
\alpha x + \beta y = \sum {\gamma_{z} z}, \\
\alpha + \beta = \sum {\gamma_{z}} = 1,
\end {gather*}
and the sum is taken over all vertices other than $x$ and $y$.
\end {lemma}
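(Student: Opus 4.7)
The plan is to prove the stated equivalence in two stages, since the lemma really bundles together two equivalent characterisations of adjacency (separation by a linear functional, and a convex-combination identity).

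First I would establish that $x$ and $y$ are adjacent vertices of $P$ if and only if there is a linear functional $c$ with $\langle c,x\rangle=\langle c,y\rangle$ and $\langle c,z\rangle$ strictly smaller for every other vertex $z$. The key fact here is the standard characterisation of faces of a polytope: a subset $F\subseteq P$ is a face iff $F=\{w\in P:\langle c,w\rangle=\max_{u\in P}\langle c,u\rangle\}$ for some $c$. Applied to the $1$-face $[x,y]$, this gives a functional attaining its maximum on $P$ exactly along the segment $[x,y]$; since the maximum of a linear functional on a polytope is attained at vertices, the only vertices on which this maximum is attained are $x$ and $y$, which is precisely the strict-separation condition. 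Conversely, given such a $c$, the face $\{w\in P:\langle c,w\rangle=\langle c,x\rangle\}$ contains $x$ and $y$ and no other vertex, hence coincides with $[x,y]$, so $x,y$ are adjacent.

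Next I would translate the separation condition into the convex-combination identity. Set $A=[x,y]=\conv\{x,y\}$ and $B=\conv\{z: z\text{ a vertex of }P,\ z\neq x,y\}$; both are compact convex sets. If $A\cap B=\varnothing$, then the separation theorem for disjoint compact convex sets produces a hyperplane strictly separating $A$ from $B$, which in particular strictly separates $\{x,y\}$ from the remaining vertices. Contrapositively, failure of strict separation forces $A\cap B\neq\varnothing$, and any point in the intersection yields scalars $\alpha,\beta\geq 0$ with $\alpha+\beta=1$ and $\gamma_z\geq 0$ with $\sum\gamma_z=1$ such that $\alpha x+\beta y=\sum\gamma_z z$. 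The reverse implication is immediate: if such an identity holds, then $A$ and $B$ share a point, so no hyperplane can strictly separate them, and in particular none can strictly separate $\{x,y\}$ from the other vertices.

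Combining the two equivalences gives the lemma. There is no real obstacle; the proof is essentially bookkeeping around two classical tools, the description of faces as maximiser sets of linear functionals and the separation theorem for disjoint compact convex sets. The only mild care needed is to notice that, because $P=\conv X$ with $X$ finite, the set $B$ above is a polytope (so automatically closed and convex) and the functional witnessing strict separation in the face-characterisation step can be chosen from among those defining supporting hyperplanes of $P$.
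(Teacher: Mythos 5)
The paper itself gives no proof of this lemma (it is quoted from Brondsted's book), so the only question is whether your argument stands on its own, and as written it has a gap in one direction. Your Stage 1 proves: $x,y$ adjacent iff there is a functional $c$ with $\langle c,x\rangle=\langle c,y\rangle$ strictly exceeding $\langle c,z\rangle$ at every other vertex $z$; the converse half of that equivalence genuinely uses the equality $\langle c,x\rangle=\langle c,y\rangle$, because you identify the face $\{w\in P:\langle c,w\rangle=\langle c,x\rangle\}$ with the segment $[x,y]$. Your Stage 2, however, only relates the convex-combination identity to ordinary strict separability of $A=[x,y]$ from $B=\conv\{z : z \text{ a vertex}, z\neq x,y\}$: the hyperplane supplied by the separation theorem for disjoint compact convex sets carries no guarantee that it takes equal values at $x$ and $y$. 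So when you ``combine the two equivalences'' you silently use the implication ``some hyperplane strictly separates $\{x,y\}$ from the remaining vertices $\Rightarrow$ $x,y$ adjacent'', which is precisely the nontrivial half of the lemma and is proved nowhere in your text: if $\langle c,x\rangle\neq\langle c,y\rangle$, the maximizer face of $c$ on $P$ may well be the single vertex $x$, and Stage 1 gives you nothing. In short, you have proved ``adjacent $\Rightarrow$ no identity'', but not ``no identity $\Rightarrow$ adjacent''.

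To close the gap you need an argument that $A\cap B=\varnothing$ forces $[x,y]$ to be a face of $P$ (or, contrapositively, that nonadjacency yields the identity). A standard route: take the midpoint $m=(x+y)/2$ and let $F$ be the minimal face of $P$ containing $m$; then $m$ lies in the relative interior of $F$ and $x,y\in F$, and writing $m$ as a convex combination with strictly positive weights on all vertices of $F$, a rearrangement using the fact that $x$ and $y$ are extreme points of $P$ shows that either $F=[x,y]$ (so $x,y$ are adjacent) or the required identity with $\alpha,\beta,\gamma_z\geq 0$ holds. Equivalently, one can take any representation of a relative-interior point of $[x,y]$ as a convex combination of vertices, move the $x$- and $y$-terms to the left, and use extremality of $x$ and $y$ to rule out negative coefficients, producing a point of $A\cap B$ whenever some other vertex appears with positive weight. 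Either patch completes your plan; the separation theorem alone does not.
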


\section {Spanning tree polytope}

A complete $H$-representation of the polytope $MST_{n}$ of the spanning tree problem in the graph $G(V,E)$ on $n$ vertices is known and has the form
\begin{gather}
\sum_{e\in E} {x_{e} = n - 1}, \label {first_MST_H}\\
\sum_{e\in E(S)} {x_{e} \leq |S|-1, \forall S\subset V},\\
x_{e} \geq 0, \forall e \in E. \label {last_MST_H}
\end{gather}

If we introduce some extra variables, then the system (\ref{first_MST_H})-(\ref{last_MST_H}) can be rewritten in an equivalent form with a polynomial ($O(n^{3})$) number of constraints, it is called an extended formulation of $MST_{n}$ \cite {Martin}. Thus, the problem can be solved in polynomial time by linear programming.

The skeleton of $MST_{n}$ is completely described, the exact clique number is given in \cite {Belov}.

\begin {theorem}
The clique number of $MST_{n}$ skeleton is polynomial in $n$:
$$\omega (MST_{n}) = \left\lfloor \frac {n^{2}}{4} \right\rfloor.$$
\end {theorem}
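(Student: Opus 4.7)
The plan is to reduce everything to the classical adjacency criterion for the MST polytope: two spanning trees $T, T'$ of $K_n$ span a $1$-face of $MST_n$ if and only if $|T \triangle T'| = 2$, i.e., $T'$ is obtained from $T$ by a single edge exchange. This can be derived from Lemma~\ref{lemma_adjacency_separation} together with the basis exchange axiom for the graphic matroid of $K_n$, so I would first record this criterion explicitly and then derive matching lower and upper bounds on the clique number.

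For the lower bound I would exhibit an explicit clique of size $\lfloor n^2/4 \rfloor$. Partition $V = V_1 \cup V_2$ with $|V_1| = \lfloor n/2 \rfloor$ and $|V_2| = \lceil n/2 \rceil$, pick any spanning trees on $V_1$ and on $V_2$, and let $F$ be their union, a forest of $n-2$ edges with exactly two components $V_1, V_2$. For each of the $|V_1|\cdot|V_2| = \lfloor n^2/4 \rfloor$ cross edges $e$ between $V_1$ and $V_2$, the graph $T_e := F \cup \{e\}$ is a spanning tree, and for distinct cross edges $e \neq e'$ one has $T_e \triangle T_{e'} = \{e,e'\}$. By the adjacency criterion these trees are pairwise adjacent in the skeleton.

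For the upper bound, consider any clique $T_1, \ldots, T_m$. For each $i>1$ encode $T_i$ by the pair $(e_i, f_i)$ with $e_i \in T_1$, $f_i \notin T_1$, and $T_i = (T_1 \setminus \{e_i\}) \cup \{f_i\}$. A direct computation shows that whenever $\{e_i,f_i\}$ and $\{e_j,f_j\}$ are disjoint, $|T_i \triangle T_j|=4$, contradicting adjacency; since $e_i, e_j \in T_1$ and $f_i, f_j \notin T_1$, the only remaining possibilities are $e_i=e_j$ or $f_i=f_j$. Viewing the pairs as edges of the bipartite graph with parts $T_1$ and $E(K_n)\setminus T_1$, these edges pairwise intersect, and a short bipartite argument then forces them to share a common endpoint. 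Two cases result: either all $e_i$ equal a fixed edge $e \in T_1$, in which case each $f_i$ lies in the cocircuit $C^{*}(T_1,e)$ of cardinality $|V_1'|\cdot|V_2'| \leq \lfloor n^2/4 \rfloor$, where $V_1', V_2'$ are the components of $T_1 - e$; or all $f_i$ equal a fixed edge $f$, in which case the $e_i$ lie on the fundamental cycle of $f$ in $T_1$, whose length is at most $n$. Combining both cases gives $m \leq \lfloor n^2/4 \rfloor$ (for $n \geq 4$, where $\lfloor n^2/4 \rfloor \geq n$).

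The main technical obstacle is the case analysis in the upper bound: one has to verify that pairwise adjacency in the clique forces the ``sunflower'' property on the pairs $(e_i, f_i)$, after which the estimate reduces to counting cocircuits in the graphic matroid of $K_n$ and applying the bound $|V_1'|\cdot|V_2'| \leq \lfloor n^2/4 \rfloor$. By contrast, the lower bound construction and the reduction to the adjacency criterion are essentially mechanical once Lemma~\ref{lemma_adjacency_separation} and the matroid exchange property are in place.
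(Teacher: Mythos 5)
The paper does not prove this statement at all: it is quoted from Belov's 1985 paper \cite{Belov}, so there is no internal proof to compare against. Your sketch supplies a self-contained argument, and it is essentially the standard (and correct) one: the adjacency criterion ``adjacent iff the trees differ by a single edge exchange'' is the matroid base-polytope edge description, your bipartition construction gives the clique of size $\lfloor n^{2}/4\rfloor$, and the star/sunflower analysis of the pairs $(e_i,f_i)$ together with the bound $|V_1'|\cdot|V_2'|\le\lfloor n^{2}/4\rfloor$ on a bond of $K_n$ gives the matching upper bound. Two small points need tightening. First, in Case A you bound the number of trees $T_i$, $i>1$, by the full cardinality $|V_1'|\cdot|V_2'|$ of the cocircuit; taken literally this yields $m\le\lfloor n^{2}/4\rfloor+1$. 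The fix is immediate: each $f_i\notin T_1$, so $f_i$ ranges over the cocircuit minus $e$ itself, giving $m-1\le |V_1'|\cdot|V_2'|-1$ and hence $m\le\lfloor n^{2}/4\rfloor$, which is the exact value you need. Second, your restriction to $n\ge 4$ in Case B is not cosmetic: for $n=3$ all three spanning trees of $K_3$ are pairwise adjacent, so $\omega(MST_3)=3>\lfloor 9/4\rfloor$, and the formula as stated in the paper only holds from $n=4$ on; it is worth making that hypothesis explicit rather than leaving it parenthetical. With those two touches, and with the adjacency criterion actually derived from Lemma~\ref{lemma_adjacency_separation} (the direction $|T\triangle T'|\ge 4\Rightarrow$ nonadjacent needs the symmetric exchange identity $x_T+x_{T'}=x_{T-e+f}+x_{T'-f+e}$), your outline compiles into a complete proof of the cited result.
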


\section {Leaf-constrained minimum spanning tree}

In contrast to the general problem, a complete $H$-representation of the polytope of the leaf-constrained minimum spanning tree is not known.
Integer programming formulation of the problem is obtained by supplementing the system (\ref{first_MST_H})-(\ref{last_MST_H}) with constraints
\begin {gather}
\sum_{e \in \delta_{v}}{x_{e} + (|\delta_{v}|-1)y_{v}} \leq |\delta_{v}|,\forall v \in V,\\
x_{e}, y_{v} \in \{0,1\},\forall e \in E, v \in V, \label {last_LCMST_H}
\end {gather}
where $\delta_{v}$ is the set of edges incident to the vertex $v$, and the additional variables $y_{v}$ correspond to the leaves.

This formulation is usually used for the problem of optimizing the number of leaves
$$\sum_{v \in V} {y_{v}} \rightarrow \max (\min).$$
A variant of the problem with optimization of the weight of the spanning tree can be obtained by adding to the system (\ref{first_MST_H})-(\ref{last_LCMST_H}) the inequalities
$$\sum_{v \in V} {y_{v}} \leq k$$
for the problem with a simple restriction on the number of leaves ($LCMST_{n,k}$),
$$\sum_{v \in U} {y_{v}} \leq k$$
for the problem with a restriction in a subgraph ($RLSMST_{n,U,k}$), and
$$\forall v \in V \backslash U:  y_{v} = 0$$
for the set version of leaf-constrained minimum spanning tree ($SVMST_{n,U}$).

We consider the minimum spanning tree problem with a restriction on the number of leaves. 
Let $|V|=n$, and $k$ is the permitted number of terminal vertices. 
We construct a spanning tree $t$ of a special form.
We choose two vertices $u$, $w$ from $V$ and a set $V_{uw}$ consisting of $k$ vertices, where some of them $V_{u} =\{v_{1},\ldots, v_{s}\}$ are adjacent to $u$, while the others $\{v_{s+1},\ldots ,v_{k}\} = V_{w}$ are adjacent to $w$.
The remaining $n-k-2$ vertices are connected by edges only with each other, or with vertices $u$ and $w$ so that the result is a spanning tree (Fig. \ref {Fig_1}).

\begin{figure}[h]
	\centering
	\includegraphics[width=5in]{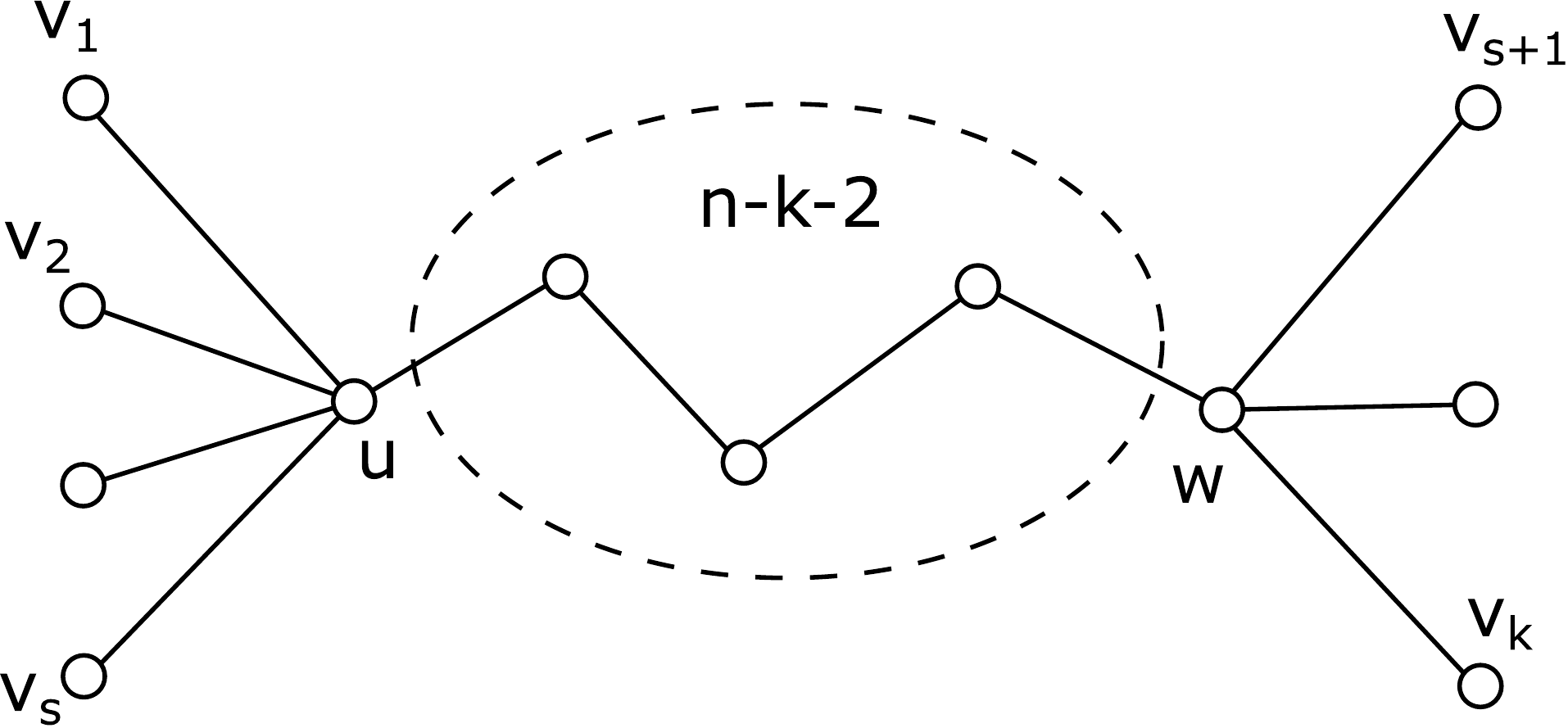}
	\caption {Spanning tree of a special form with $k$ leaves.}
	\label {Fig_1}
\end{figure}

\begin {lemma} \label {lemma_discarding_vertices_HP_LC}
The graph $t_{h}$, obtained from the tree $t$ by discarding the vertices $v_{1}, v_{2},\ldots, v_{k}$ with the corresponding edges $(v_{i},u)$ and $(v_{j},w)$, is a Hamiltonian path on $n-k$ remaining vertices.
\end {lemma}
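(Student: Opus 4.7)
The plan is to use the standard fact that removing a leaf from a tree yields a tree, and then exploit a degree count to force the resulting tree to be a path.

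First, I would observe that in $t$, each of the vertices $v_{1},\ldots,v_{k}$ is a leaf by construction (each $v_{i}\in V_{u}$ has its unique edge going to $u$, and likewise for $V_{w}$). Since deleting a leaf from a tree yields a smaller tree, deleting them one at a time shows that $t_{h}$ is a tree on $n-k$ vertices, so $\sum_{v\in V(t_{h})}\deg_{t_{h}}(v) = 2(n-k-1)$.

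Next, I would classify the $n-k$ vertices of $t_{h}$: they are $u$, $w$, and the $n-k-2$ ``internal'' vertices that, by the construction of the special tree, were joined only to each other and to $u,w$ in $t$. These internal vertices are not leaves of $t$ (the leaves of $t$ are exactly $v_{1},\ldots,v_{k}$, which account for all $k$ permitted leaves), so each of them has $\deg_{t}\geq 2$. Since none of their incident edges went to any $v_{i}$, their degrees are preserved: $\deg_{t_{h}}(x)\geq 2$ for every internal vertex $x$. Meanwhile $u$ and $w$ are still connected to the rest of the tree (removing leaves keeps a tree connected), so $\deg_{t_{h}}(u),\deg_{t_{h}}(w)\geq 1$.

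Plugging these lower bounds into the degree sum gives
\[
2(n-k-1) \;=\; \deg_{t_{h}}(u)+\deg_{t_{h}}(w)+\sum_{x\text{ internal}}\deg_{t_{h}}(x) \;\geq\; 1+1+2(n-k-2),
\]
which is an equality. Hence every inequality above is tight: $\deg_{t_{h}}(u)=\deg_{t_{h}}(w)=1$ and every internal vertex has degree exactly $2$ in $t_{h}$. A tree in which exactly two vertices have degree $1$ and all others have degree $2$ is a Hamiltonian path (with endpoints $u$ and $w$), which is what we wanted.

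I do not foresee a genuine obstacle here; the only point that needs a line of care is checking that the internal vertices really do retain their degree after the deletion, which is exactly the content of the ``only with each other, or with $u$ and $w$'' clause in the definition of a tree of special form. Everything else is bookkeeping with the handshake identity.
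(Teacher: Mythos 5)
Your proof is correct and rests on the same key observation as the paper's: the vertices of $V\setminus V_{uw}$ other than $u,w$ keep their degree after the deletion and cannot be leaves of $t$, since $v_{1},\ldots,v_{k}$ already exhaust the $k$ permitted leaves, so $t_{h}$ is a tree whose only leaves are $u$ and $w$. Your closing step via the handshake identity is just a more explicit version of the paper's remark that a tree with exactly two terminal vertices is a simple (Hamiltonian) path, so the two arguments are essentially the same.
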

\begin {proof}
The graph $t_{h}$ is a spanning tree on the vertex set $V\backslash V_{uw}$.
Therefore, it has at least two terminal vertices.
These vertices can only be $u$ and $w$, since if any other vertex from $V\backslash V_{uw}$ is a leaf in the tree $t_{h}$, then it is a leaf in the tree $t$ as well.
But the number of leaves in $t$ cannot exceed $k = |V_{uw}|$.
Therefore, $t_{h}$ is a spanning tree that has exactly two terminal vertices: $u$ and $w$.
Thus, $t_{h}$ is a simple path passing through all vertices of $V\backslash V_{uw}$ such that $u$ and $w$ are terminal vertices.
\end {proof}

We fix the sets $V_{u}$ and $V_{w}$ and consider the set $T_{k}$ of all spanning trees of the form described above with $k$ leaves.
By Lemma \ref {lemma_discarding_vertices_HP_LC}, every such tree contains a path $t_{h}$ with terminal vertices $u$ and $w$ passing through all vertices of $V\backslash V_{uw}$.
The converse is also true: for every such path, there corresponds a tree from $T_{k}$.
Let $HP_{uw}$ be the convex hull of the characteristic vectors of the Hamiltonian paths $t_{h}$ between the vertices $u$ and $w$.

\begin {lemma} \label {lemma_LCMST_adjacency}
Vertices $x$ and $y$ of the polytope $LCMST_{n,k}$ that correspond to the trees of $T_{k}$ are nonadjacent if and only if the corresponding vertices $x_{h}$ and $y_{h}$ of $HP_{uw}$ are nonadjacent as well.
\end {lemma}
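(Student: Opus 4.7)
The plan is to apply Lemma~\ref{lemma_adjacency_separation} in both directions, exploiting the observation that every tree in $T_{k}$ splits into a fixed ``star part'' (the edges from $V_{u}$ to $u$ and from $V_{w}$ to $w$, common to all trees in $T_{k}$) and a variable ``path part'' $t_{h}$, which is a Hamiltonian $u$-$w$ path on $V \setminus V_{uw}$ by Lemma~\ref{lemma_discarding_vertices_HP_LC}. Writing $x_{s}$ for the characteristic vector of the star part, I have $x = x_{s} + x_{h}$ and $y = x_{s} + y_{h}$, and an obvious bijection between $T_{k}$ and the Hamiltonian $u$-$w$ paths on $V \setminus V_{uw}$.

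For the ``if'' direction I would start from a separating convex combination
$\alpha x_{h} + \beta y_{h} = \sum \gamma_{z_{h}} z_{h}$
witnessing the nonadjacency of $x_{h}$ and $y_{h}$ in $HP_{uw}$. Adding $x_{s}$ with the total coefficient $\alpha + \beta = \sum \gamma_{z_{h}} = 1$ to each side gives $\alpha x + \beta y = \sum \gamma_{z_{h}}(x_{s} + z_{h})$, a convex combination in $LCMST_{n,k}$ whose right-hand-side vertices lie in $T_{k}$ and are distinct from $x$ and $y$. By Lemma~\ref{lemma_adjacency_separation}, $x$ and $y$ are nonadjacent.

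The substantive direction is the converse. Starting from a separating combination $\alpha x + \beta y = \sum \gamma_{z} z$ over vertices $z \neq x, y$ of $LCMST_{n,k}$, the key step is to show that every $z$ in the sum actually belongs to $T_{k}$ with the same star part $x_{s}$. This I would obtain by comparing coordinates on the edges incident to $V_{uw}$: for each $v_{i} \in V_{u}$ the edge $(v_{i}, u)$ has coordinate $1$ in both $x$ and $y$ while every other edge at $v_{i}$ has coordinate $0$, which forces every $z$ with $\gamma_{z} > 0$ to use $(v_{i}, u)$ as the unique edge at $v_{i}$; the same argument applies to $V_{w}$. Every such $z$ thus has at least $k$ leaves, and combined with the LCMST bound of at most $k$ leaves it has exactly $k$ leaves, all in $V_{uw}$. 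An argument identical to the proof of Lemma~\ref{lemma_discarding_vertices_HP_LC} then shows that the complementary subtree $z_{h}$ on $V \setminus V_{uw}$ is a Hamiltonian $u$-$w$ path, i.e., a vertex of $HP_{uw}$.

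Once this is established, subtracting the common $x_{s}$ (with total coefficient $1$ on both sides) from the original equation yields $\alpha x_{h} + \beta y_{h} = \sum \gamma_{z} z_{h}$, where $z \neq x, y$ and the common star part force $z_{h} \neq x_{h}, y_{h}$. By Lemma~\ref{lemma_adjacency_separation} this certifies nonadjacency of $x_{h}$ and $y_{h}$ in $HP_{uw}$. The main obstacle is exactly the forcing step in the previous paragraph: ruling out ``extraneous'' spanning trees outside $T_{k}$ appearing in the convex combination. Fortunately it reduces cleanly to the zero-coordinate analysis at edges incident to $V_{uw}$ combined with the leaf bound, so no deeper polyhedral input is needed.
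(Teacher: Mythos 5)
Your proof follows essentially the same route as the paper: the same star-plus-path decomposition, the same bijection between $T_{k}$ and Hamiltonian $u$--$w$ paths, and Lemma~\ref{lemma_adjacency_separation} applied in both directions. The only difference is that you spell out the forcing step in the converse direction (the $0/1$ coordinate analysis at the edges incident to $V_{uw}$, the ``at most $k$ leaves'' bound, and the reuse of the argument of Lemma~\ref{lemma_discarding_vertices_HP_LC} to conclude $z \in T_{k}$), which the paper's proof states more tersely but relies on implicitly; this is a correct and welcome elaboration, not a different argument.
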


\begin {proof}

We suppose that the vertices $x_{h}$ and $y_{h}$ of the polytope $HP_{uw}$ are nonadjacent.
By Lemma \ref {lemma_adjacency_separation}, there are nonnegative $\alpha, \beta, \gamma_{z}$ such that $\alpha + \beta = \sum {\gamma_{z}} = 1$ and the following holds:
\begin {equation}
\alpha x_{h} + \beta y_{h} = \sum {\gamma_{z} z_{h}}, \ z_{h}\in HP_{uw}. \label {nonadjacent_HP_uw}
\end {equation}
There is a bijection between the Hamiltonian paths of $HP_{uw}$ and the spanning trees of $T_{k}$.
If we supplement (\ref {nonadjacent_HP_uw}) with equalities for the coordinates that correspond to the edges $(v_{i},u)$ and $(v_{j},w)$, then we obtain
$$\alpha x + \beta y = \sum {\gamma_{z} z}, \ z \in T_{k},$$
hence the vertices $x$ and $y$ of the polytope $LCMST_{n,k}$ are nonadjacent.

Now we suppose that the vertices $x$ and $y$ are nonadjacent, then there are nonnegative $\alpha, \beta, \gamma_{z}$ such that $\alpha + \beta = \sum {\gamma_{z}} = 1$ and
$$\alpha x + \beta y = \sum {\gamma_{z} z}.$$

For points $x$ and $y$, all the coordinates that correspond to the edges incident to the vertices $v_{1}, v_{2},\ldots , v_{k}$ coincide, since these edges are fixed for spanning trees of $T_{k}$, and consequently these coordinates coincide also at the points $z$, therefore
$$\alpha x + \beta y = \sum {\gamma_{z} z}, \ z \in T_{k}.$$
There is a bijection between the spanning trees of $T_{k}$ and the Hamiltonian paths of $HP_{uw}$. Thus,
$$\alpha x_{h} + \beta y_{h} = \sum {\gamma_{z} z_{h}}, \ z_{h}\in HP_{uw},$$
and the vertices $x_{h}$ and $y_{h}$ of the polytope $HP_{uw}$ are nonadjacent.
\end {proof}

By Lemma \ref {lemma_LCMST_adjacency}, we can transfer the properties of the traveling salesman polytope to $LCMST_{n,k}$.
It is sufficient to consider the following simple fact: two vertices of the polytope $HP_{uw}$ of the Hamiltonian paths are adjacent if and only if the vertices of the traveling salesman polytope that correspond to the Hamiltonian cycles constructed by merging two terminal vertices into one are adjacent as well.
Thus, from Lemma \ref {lemma_LCMST_adjacency} and the well-known result of Papadimitriou \cite{Papa}, it follows

\begin {theorem}
The question whether two vertices of $LCMST_{n,k}$ are nonadjacent is NP-complete.
\end {theorem}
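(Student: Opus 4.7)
My approach is to reduce the NP-complete problem of deciding nonadjacency in the traveling salesman polytope, settled by Papadimitriou \cite{Papa}, to nonadjacency on $LCMST_{n,k}$, chaining through the Hamiltonian $u$--$w$ path polytope $HP_{uw}$ via Lemma \ref{lemma_LCMST_adjacency}.

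Starting from an instance $(G', C_{1}, C_{2})$ of TSP nonadjacency on $m$ vertices, I would pick any vertex $r$ and split it into two new vertices $u$ and $w$ each inheriting the full neighbourhood of $r$; every Hamiltonian cycle through $r$ then unfolds into a Hamiltonian $u$--$w$ path, so $C_{1}$ and $C_{2}$ correspond to paths $P_{1}, P_{2}$ in a graph on $m+1$ vertices. By the observation recalled immediately before the theorem, cycle-adjacency and path-adjacency coincide under this bijection between vertices of the TSP polytope and vertices of $HP_{uw}$. Next I would attach $k$ new vertices $v_{1}, \ldots, v_{k}$ to this graph, joining $v_{1}$ only to $u$ and $v_{2}, \ldots, v_{k}$ only to $w$, and complete to the complete graph on $n := m + k + 1$ vertices with arbitrary edge weights. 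This produces an LCMST instance with leaf bound $k$ in which $P_{1}$ and $P_{2}$ extend uniquely to spanning trees $T_{1}, T_{2}$ of the form drawn in Fig.~\ref{Fig_1} (with $|V_{u}| = 1$ and $|V_{w}| = k - 1$), both members of the family $T_{k}$ to which Lemma \ref{lemma_LCMST_adjacency} applies.

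By that lemma, $T_{1}$ and $T_{2}$ are nonadjacent in $LCMST_{n,k}$ precisely when $P_{1}, P_{2}$ are nonadjacent in $HP_{uw}$, which in turn is equivalent to nonadjacency of $C_{1}, C_{2}$ in the TSP polytope; since the whole construction is polynomial, this gives NP-hardness. Membership in NP follows from Lemma \ref{lemma_adjacency_separation}: a certificate of nonadjacency is a convex-combination identity, and by Carath\'eodory's theorem it may be taken supported on polynomially many spanning-tree vertices, so the coefficients $\alpha, \beta, \gamma_{z}$ can be recovered from a polynomial-size linear system with coefficients of polynomial bit-length.

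I do not anticipate a genuine combinatorial obstacle, since the combinatorial content is already encapsulated in Lemma \ref{lemma_LCMST_adjacency} and in Papadimitriou's theorem. The only delicate point is bookkeeping: one must check that the two pairs of trees built along the chain $\text{TSP} \to HP_{uw} \to LCMST_{n,k}$ satisfy the precise hypotheses under which each of the two correspondences — cycle-to-path via vertex splitting and path-to-tree via attaching the fixed leaf-edges — preserves adjacency. Once the attachment is arranged so that both $T_{1}$ and $T_{2}$ lie in the same family $T_{k}$ with the same $V_{u}, V_{w}$, the lemma applies uniformly and the equivalence of nonadjacencies goes through.
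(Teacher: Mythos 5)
Your proposal is correct and follows essentially the same route as the paper: it chains Papadimitriou's NP-completeness of TSP nonadjacency through the cycle--path correspondence (merging/splitting the terminal vertices) to $HP_{uw}$, and then applies Lemma \ref{lemma_LCMST_adjacency} to transfer nonadjacency to trees of $T_{k}$ in $LCMST_{n,k}$, with the NP-membership certificate argument being the standard one implicit in the paper.
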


Despite the complexity of $LCMST_{n,k}$ skeleton, we can obtain a superpolynomial lower bound on its clique number.

\begin {theorem} \label {theorem_LC_clique}
The clique number of $LCMST_{n,k}$ skeleton is superpolynomial in $n$:
$$\omega(LCMST_{n,k}) \geq 2^{(\sqrt {\left\lfloor \frac {n-k-1}{2} \right\rfloor} - 9) \slash 2}.$$
\end {theorem}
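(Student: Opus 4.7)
The plan is to exploit the adjacency-preserving reduction supplied by Lemma \ref{lemma_LCMST_adjacency} in order to transplant a known superpolynomial lower bound on the clique number of the traveling salesman polytope into $LCMST_{n,k}$. First I would fix two vertices $u, w \in V$ together with an arbitrary partition of a $k$-element subset $V_{uw} \subset V\setminus\{u,w\}$ into sets $V_u$ and $V_w$; this specifies a family $T_k$ of spanning trees of the special form introduced before Lemma \ref{lemma_discarding_vertices_HP_LC}, whose characteristic vectors form a subset of the vertex set of $LCMST_{n,k}$. By Lemma \ref{lemma_LCMST_adjacency} the subgraph of the 1-skeleton of $LCMST_{n,k}$ induced on these vectors is isomorphic to the 1-skeleton of $HP_{uw}$, the polytope of Hamiltonian $u$--$w$ paths in the complete graph on the $n-k$ remaining vertices. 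Consequently, a clique in the skeleton of $HP_{uw}$ lifts directly to a clique of the same size in $LCMST_{n,k}$.

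Next I would invoke the endpoint-identification observation made immediately before the theorem: merging the terminal vertices $u$ and $w$ of each Hamiltonian path produces a Hamiltonian cycle in the complete graph on $n-k-1$ vertices, and this bijection preserves (non)adjacency between $HP_{uw}$ and the traveling salesman polytope on $n-k-1$ vertices. Writing $TSP_{N}$ for the latter with $N=n-k-1$, this yields
$$\omega(LCMST_{n,k}) \;\geq\; \omega(HP_{uw}) \;=\; \omega(TSP_{n-k-1}).$$
To finish, I would apply the known lower bound on the clique number of the traveling salesman polytope, namely $\omega(TSP_N) \geq 2^{(\sqrt{\lfloor N/2\rfloor}-9)/2}$ (obtained in Bondarenko's line of work cited in the introduction); substituting $N=n-k-1$ gives exactly the inequality claimed by the theorem.

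The main obstacle, such as it is, is conceptual rather than computational: one must check that both reductions are genuinely two-way, so that pairwise adjacency of the Hamiltonian cycles used to build the TSP clique really does imply pairwise adjacency of the corresponding trees in $LCMST_{n,k}$. The path-to-cycle step is standard. The lift through Lemma \ref{lemma_LCMST_adjacency} requires noting that every tree in $T_k$ shares the same coordinates on the edges $(v_i,u)$ and $(v_j,w)$, so these coordinates contribute no new convex dependencies and thus can neither create nor destroy adjacency. Once these verifications are in place, no additional combinatorial construction is needed: the theorem follows from a clean parameter substitution into the established TSP clique bound.
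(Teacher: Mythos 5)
Your proposal is correct and follows essentially the same route as the paper: fix $u$, $w$, $V_{uw}$, use Lemma \ref{lemma_LCMST_adjacency} to identify the induced subgraph of the $LCMST_{n,k}$ skeleton on $T_k$ with the skeleton of $HP_{uw}$, pass to $TSP_{n-k-1}$ via the endpoint-merging adjacency equivalence stated before the theorem, and substitute $N=n-k-1$ into the known bound $\omega(TSP_N)\geq 2^{(\sqrt{\lfloor N/2\rfloor}-9)/2}$. The paper's proof is exactly this (stated in one sentence), so no discrepancy.
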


For the proof of Theorem \ref {theorem_LC_clique} it suffices to apply Lemma \ref {lemma_LCMST_adjacency} and the lower bound on the clique number of the skeleton of the traveling salesman polytope $TSP_{n}$ \cite{Bondarenko-DAN,Bondarenko-Maximenko}:
$$\omega(TSP_{n}) \geq 2^{(\sqrt {\left\lfloor \frac {n}{2} \right\rfloor} - 9) \slash 2)}.$$

Restricted-leaf-in-subgraph and set version of leaf-constrained minimum spanning tree problems are considered similarly.
In the first case, for $RLSMST_{n,U,k}$, it suffices to take the subgraph on the vertices $U$ instead of the graph $G$ and construct the corresponding spanning tree.
In the second case, for $SVMST_{n,U}$, it suffices to identify the set of leaves $V_{uw}$ with the set $U$.

\begin {theorem}
The question whether two vertices of $RLSMST_{n,U,k}$ are nonadjacent is NP-complete.
\end {theorem}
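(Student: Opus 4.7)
The plan is to imitate, essentially verbatim, the construction used for $LCMST_{n,k}$, with the sole modification that all the vertices playing the roles of ``leaf'' or ``path vertex'' must lie in $U$, while the vertices of $V\setminus U$ are harmlessly parked in a fixed subtree (they are allowed to be leaves of the spanning tree because only leaves belonging to $U$ are counted against the bound $k$).

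Concretely, I would fix two vertices $u, w\in U$ and a subset $V_{uw}\subseteq U\setminus\{u,w\}$ of size $k$, split as $V_u\cup V_w$, and consider spanning trees $t$ of $G$ of the following special form: every vertex of $V_u$ (respectively $V_w$) is a pendant attached to $u$ (respectively $w$); every vertex of $V\setminus U$ is attached through a fixed rooted forest hanging off $u$; and the remaining vertices of $U\setminus V_{uw}$, together with $u$ and $w$, form a Hamiltonian path in the induced subgraph $G[U\setminus V_{uw}]$ with endpoints $u,w$. Then I would state and prove the exact analog of Lemma~\ref{lemma_discarding_vertices_HP_LC}: the leaves of $t$ lying in $U$ are exactly $V_{uw}$, so after deleting the $k$ pendant edges to $V_{uw}$ and the fixed pendant forest attached at $u$, what remains is a spanning tree on $U\setminus V_{uw}$ with at most two leaves, hence a Hamiltonian path from $u$ to $w$.

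Next I would repeat the argument of Lemma~\ref{lemma_LCMST_adjacency} to obtain: two vertices $x,y$ of $RLSMST_{n,U,k}$ that correspond to such special trees are nonadjacent if and only if their projections $x_h, y_h$ onto the coordinates indexing the edges of $G[U\setminus V_{uw}]$ are nonadjacent in the Hamiltonian-path polytope $HP_{uw}$. The only point to check is that all coordinates outside this projection block are constant across the family of special trees (the pendants $V_u\to u$, $V_w\to w$ and the fixed pendant forest on $V\setminus U$), which lets us add the corresponding trivial equalities to a separating convex combination in either direction.

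Finally, composing this with the reduction from Hamiltonian-path polytope adjacency to traveling-salesman polytope adjacency (via identifying $u$ and $w$ into one vertex, as already used after Lemma~\ref{lemma_LCMST_adjacency}) and invoking Papadimitriou's NP-completeness result \cite{Papa} yields the claim; membership in NP is immediate from Lemma~\ref{lemma_adjacency_separation}, since a separating convex combination is a polynomial-size certificate of nonadjacency. The only thing that might look like an obstacle is the treatment of $V\setminus U$, but once one observes that the leaf constraint only concerns vertices of $U$, any fixed attachment of $V\setminus U$ (for instance a star centered at $u$) works and is inert with respect to the adjacency analysis, so the $LCMST$ proof transfers without new ideas.
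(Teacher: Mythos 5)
Your proposal is correct and takes essentially the same route as the paper, which handles $RLSMST_{n,U,k}$ by repeating the $LCMST_{n,k}$ special-tree construction on the vertex set $U$ and then transferring adjacency through the Hamiltonian-path polytope to Papadimitriou's TSP result. Your only addition is to spell out the fixed, inert attachment of the vertices of $V\setminus U$ (e.g.\ a star at $u$), a detail the paper leaves implicit, so nothing further is needed.
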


\begin {theorem}
The clique number of $RLSMST_{n,U,k}$ skeleton is superpolynomial in $|U|$:
$$\omega(RLSMST_{n,U,k}) \geq 2^{(\sqrt {\left\lfloor \frac {|U|-k-1}{2} \right\rfloor} - 9) \slash 2}.$$
\end {theorem}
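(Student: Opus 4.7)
The plan is to mirror the argument used for $LCMST_{n,k}$, but arrange the construction so that every ``flexible'' piece of the spanning tree lives inside $U$, while vertices of $V\setminus U$ are attached in a rigid way that does not interfere with the leaf count. Concretely, I would fix two distinguished vertices $u,w\in U$, a subset $V_{uw}\subset U\setminus\{u,w\}$ of size $k$ partitioned as $V_{u}\sqcup V_{w}$ (the prospective leaves in $U$, attached respectively to $u$ and to $w$), and a fixed attachment of every vertex of $V\setminus U$ as a pendant edge hanging off $u$. Having fixed this data, I consider those spanning trees of $G$ that agree with it on all the pendant edges just described and that, when restricted to $U\setminus V_{uw}$, form a Hamiltonian path between $u$ and $w$. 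Each such tree has leaves precisely $V_{uw}\cup(V\setminus U)$, so exactly $k$ of them lie in $U$; in particular every such tree is feasible for $RLSMST_{n,U,k}$.

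Next, I would establish the analogue of Lemma~\ref{lemma_LCMST_adjacency} for this family. For any two trees $x,y$ of the above special form and any convex combination of characteristic vectors of other spanning trees equal to $\alpha x+\beta y$, the coordinates indexing the fixed edges (the pendants to $u$ and $w$ from $V_{uw}$ and the pendants from $V\setminus U$ to $u$) are constant and therefore force the combining trees to share the same fixed skeleton. After removing these common coordinates, the identity $\alpha x+\beta y=\sum\gamma_{z}z$ becomes an identical relation between the corresponding Hamiltonian paths in $HP_{uw}$ on the vertex set $U\setminus V_{uw}$, and conversely. By Lemma~\ref{lemma_adjacency_separation}, non-adjacency in $RLSMST_{n,U,k}$ of two such trees is therefore equivalent to non-adjacency in $HP_{uw}$ of the corresponding paths.

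Finally, I would invoke the observation recorded after Lemma~\ref{lemma_LCMST_adjacency}: two vertices of $HP_{uw}$ are adjacent if and only if the two vertices of $TSP_{|U|-k-1}$ obtained by merging the endpoints $u$ and $w$ into one are adjacent. Hence every clique in $TSP_{|U|-k-1}$ lifts, through the bijection with trees of the special form, to a clique in the skeleton of $RLSMST_{n,U,k}$, and the Bondarenko--Maksimenko lower bound
$$\omega(TSP_{m})\geq 2^{(\sqrt{\lfloor m/2\rfloor}-9)/2}$$
with $m=|U|-k-1$ yields the desired estimate. The only bookkeeping point that needs care is making sure that hanging every vertex of $V\setminus U$ off the single vertex $u$ neither alters the set of leaves that belong to $U$ nor obstructs the Hamiltonian path construction on $U\setminus V_{uw}$; both are immediate, since $u$ and $w$ automatically have degree at least two in every tree of the family.
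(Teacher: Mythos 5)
Your argument is essentially the paper's: the paper handles $RLSMST_{n,U,k}$ in one sentence by repeating the $LCMST_{n,k}$ construction inside the subgraph induced by $U$, and your explicit choice of attaching every vertex of $V\setminus U$ as a fixed pendant of $u$ (so these vertices are leaves that the constraint does not count) is exactly the detail the paper leaves implicit. The subsequent transfer of adjacency to $HP_{uw}$ on $U\setminus V_{uw}$ and then, by merging the endpoints, to $TSP_{|U|-k-1}$ with the Bondarenko--Maksimenko bound is the same route the paper takes, so the proposal is correct and takes the same approach.
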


\begin {theorem}
The question whether two vertices of $SVMST_{n,U}$ are nonadjacent is NP-complete.
\end {theorem}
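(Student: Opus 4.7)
The plan is to follow the same template used for $LCMST_{n,k}$, with the set $U$ playing the role of $V_{uw}$, as suggested in the paper. First I would fix two vertices $u, w \in V \setminus U$ and choose an arbitrary partition $U = V_u \cup V_w$ into two nonempty subsets. The family $T_U$ of spanning trees of interest consists of those in which every vertex of $V_u$ is a leaf attached to $u$, every vertex of $V_w$ is a leaf attached to $w$, and the remaining $n - |U| - 2$ vertices of $V \setminus (U \cup \{u, w\})$ are joined by edges inside $V \setminus U$ so as to form a spanning tree. By construction every leaf of such a tree lies in $U$, so each member of $T_U$ is feasible for $SVMST_{n,U}$; note that requiring $V_u$ and $V_w$ to be nonempty guarantees that $u$ and $w$ themselves have degree at least two and hence are not leaves.

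Next I would establish the direct analogue of Lemma \ref{lemma_discarding_vertices_HP_LC}: deleting the $|U|$ pendant vertices of $U$ together with their unique incident edges leaves a spanning tree $t_h$ on $V \setminus U$ whose leaves can only be $u$ and $w$, since any other leaf of $t_h$ would also be a leaf of the original tree while lying outside $U$, violating feasibility of $SVMST_{n,U}$. Hence $t_h$ is a Hamiltonian $(u,w)$-path, and the map $t \mapsto t_h$ is a bijection between $T_U$ and the Hamiltonian $(u,w)$-paths on $V \setminus U$; let $HP_{uw}$ be the convex hull of the characteristic vectors of the latter. The analogue of Lemma \ref{lemma_LCMST_adjacency} then follows almost verbatim: the coordinates corresponding to the pendant edges from $V_u$ to $u$ and from $V_w$ to $w$ are constant across $T_U$, so any convex combination witnessing nonadjacency on one polytope lifts or restricts to a convex combination witnessing nonadjacency on the other.

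Finally, as in the $LCMST_{n,k}$ argument, merging the endpoints $u$ and $w$ converts Hamiltonian $(u,w)$-paths on $V \setminus U$ into Hamiltonian cycles on $n - |U|$ vertices in a way that preserves adjacency of the corresponding polytope vertices. Nonadjacency in $HP_{uw}$ therefore reduces in polynomial time to nonadjacency in $TSP_{n-|U|}$, and Papadimitriou's result \cite{Papa} on the NP-completeness of nonadjacency for the traveling salesman polytope completes the proof. The main obstacle is the bookkeeping step confirming that fixing the edges incident to $U$ as pendant edges correctly captures the feasibility constraint of $SVMST_{n,U}$ on the subfamily $T_U$, so that the convex-combination argument really stays inside the polytope; once this is checked, everything else is a direct transcription of the $LCMST_{n,k}$ proof.
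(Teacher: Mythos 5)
Your proposal is correct and follows essentially the same route as the paper, which for $SVMST_{n,U}$ simply says to identify the set of pendant vertices $V_{uw}$ with $U$ and repeat the $LCMST_{n,k}$ argument (discarding $U$ yields a Hamiltonian $(u,w)$-path, the adjacency lemma transfers, and Papadimitriou's result finishes the reduction). The only quibbles are cosmetic: $T_U$ should be defined as the \emph{feasible} trees with the prescribed pendant structure (feasibility is not automatic ``by construction'' but is exactly what forces the Hamiltonian path, as your lemma proof already uses), and merging $u$ and $w$ gives Hamiltonian cycles on $n-|U|-1$, not $n-|U|$, vertices, which is immaterial for the NP-completeness conclusion.
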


\begin {theorem}
The clique number of $SVMST_{n,U}$ skeleton is superpolynomial in $n$:
$$\omega(SVMST_{n,U}) \geq 2^{(\sqrt {\left\lfloor \frac {n-|U|-1}{2} \right\rfloor} - 9) \slash 2}.$$
\end {theorem}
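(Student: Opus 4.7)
The plan is to mimic the argument used for $LCMST_{n,k}$, making the special choice $V_{uw}=U$ so that the family of spanning trees we construct automatically satisfies the SVMST constraint ``all leaves belong to $U$''.

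First, we fix two vertices $u,w\in V\setminus U$ and an arbitrary partition $U=V_u\sqcup V_w$ with both parts nonempty, and consider the family $T_U$ of spanning trees in which every vertex of $V_u$ is a leaf adjacent to $u$, every vertex of $V_w$ is a leaf adjacent to $w$, and the remaining $n-|U|$ vertices (including $u$ and $w$) form a subtree $t_h$. Lemma~\ref{lemma_discarding_vertices_HP_LC} applies verbatim with $k=|U|$ and shows that $t_h$ must be a Hamiltonian path from $u$ to $w$ on $V\setminus U$. In particular, the leaf set of every $t\in T_U$ is exactly $U$, so each such characteristic vector is indeed a vertex of $SVMST_{n,U}$.

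Next, we reproduce the proof of Lemma~\ref{lemma_LCMST_adjacency} almost verbatim: the coordinates corresponding to the fixed edges $(v,u)$ for $v\in V_u$ and $(v,w)$ for $v\in V_w$ are constant across all of $T_U$, so a separation witness of nonadjacency in one of the polytopes $SVMST_{n,U}$ and $HP_{uw}$ transfers to a witness for the corresponding pair in the other. Combined with the standard observation that contracting $u$ and $w$ yields an adjacency-preserving bijection between Hamiltonian paths in $HP_{uw}$ and Hamiltonian cycles on $n-|U|-1$ vertices, this gives
$$\omega(SVMST_{n,U}) \geq \omega(HP_{uw}) \geq \omega(TSP_{n-|U|-1}).$$
Substituting the lower bound on $\omega(TSP_m)$ from \cite{Bondarenko-DAN,Bondarenko-Maximenko} with $m=n-|U|-1$ then produces the claimed estimate.

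No genuine obstacle arises; the argument is a mechanical specialization of the LCMST case. The only point that needs to be checked carefully is that setting $V_{uw}=U$ forces the leaf set of every tree in $T_U$ to coincide with $U$ rather than merely be contained in it, so that the trees really are feasible for $SVMST_{n,U}$ and the bijection with $HP_{uw}$ is well-defined; this is immediate from the structural description of $T_U$, given that $V_u$ and $V_w$ are both nonempty.
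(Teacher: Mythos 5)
Your argument is correct and coincides with the paper's own treatment: the paper proves this theorem precisely by identifying the leaf set $V_{uw}$ with $U$ and reusing Lemma~\ref{lemma_discarding_vertices_HP_LC}, Lemma~\ref{lemma_LCMST_adjacency} and the bound $\omega(TSP_{m})\geq 2^{(\sqrt{\lfloor m/2\rfloor}-9)/2}$ with $m=n-|U|-1$, exactly as you do. Your extra check that the leaf set is forced to be exactly $U$ (so the trees are feasible and in bijection with the Hamiltonian paths of $HP_{uw}$) is the same observation the paper leaves implicit.
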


\section {Degree-constrained minimum spanning tree}

Now we consider the problem of constructing a minimum spanning tree in which no vertex has a degree larger than $k$.
As for the problem with restriction on the number of leaves, a complete $H$-representation of the polytope $DCMST_{n,k}$ is not known \cite {Goemans}.
Integer programming formulation of the problem is obtained by supplementing the system (\ref{first_MST_H})-(\ref{last_MST_H}) with constraints
\begin {gather*}
\sum_{e \in \delta_{v}} {x_{e}} \leq k, \\
x_{e} \in {0,1}, v \in V.
\end {gather*}

For $n>2$ and $k>1$ we denote by
$$s=\left\lfloor \frac {n-2}{k-1} \right\rfloor$$
and construct a tree $t$ of a special form.
We divide the set of vertices into $s$ subsets of the form $V_{i}=\{v_{i},v_{i,1},\ldots, v_{i,k-2}\}$ with $k-1$ vertices.
All the remaining vertices, there are from $2$ to $k + 1$ of them, are divided into two subsets $V_{0} = \{v_{0},v_{0,1},...,v_{0,p}\}$ and $V_{s+1}=\{v_{s+1},v_{s+1,1}\ldots,v_{n}\}$.
We consider the tree of the following form: in each subset $V_{i}$ all vertices are adjacent only with the vertex $v_{i}$ (Fig. \ref {Fig_2}).
Note that the degrees of the vertices $v_{0}$ and $v_{s+1}$ cannot exceed $k$ by construction.

\begin{figure}[h]
	\centering
	\includegraphics[width=6in]{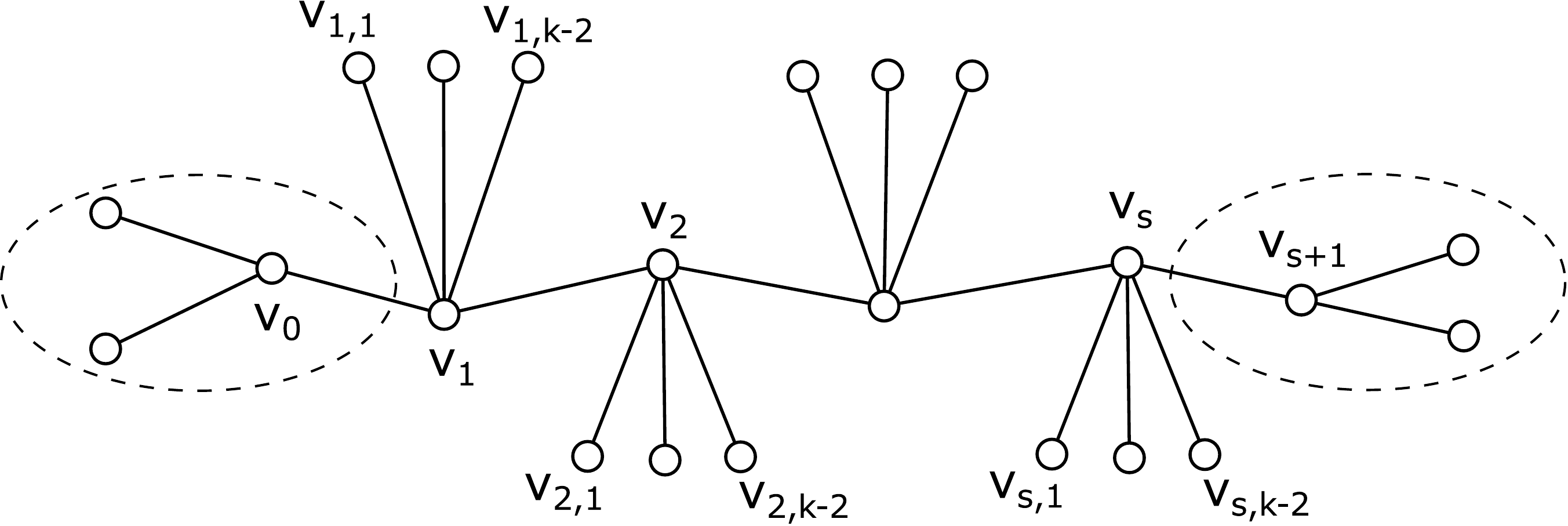}
	\caption {Spanning tree of a special form with vertex degree not exceeding $k$.}
	\label {Fig_2}
\end{figure}

\begin {lemma} \label {lemma_discarding_vertices_HP_DC}
The graph $t_{h}$, obtained from the tree $t$ by discarding the vertices $v_{0}$, $v_{s+1}$, and $v_{i,j}$ with the corresponding edges $(v_{i,j},v_{i})$,
is a Hamiltonian path with terminal vertices $v_{1}$ and $v_{s}$.
\end {lemma}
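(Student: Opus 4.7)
The plan is to mirror the proof of Lemma~\ref{lemma_discarding_vertices_HP_LC} as closely as possible, exploiting the caterpillar-like structure of the spanning tree $t$ prescribed by the construction. The key observation to start from is that every vertex of the form $v_{i,j}$ is, by definition, adjacent only to its corresponding center $v_i$; hence each such vertex has degree $1$ and is a leaf of $t$.

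First I would peel off all these leaves from $t$, one by one. Since removing a leaf of a tree yields a tree, what remains is a subtree $t'$ of $t$ on the $s+2$ center vertices $\{v_0, v_1, \ldots, v_s, v_{s+1}\}$. By the construction depicted in Fig.~\ref{Fig_2}, this backbone $t'$ is precisely the path $v_0, v_1, \ldots, v_s, v_{s+1}$. Next I would discard $v_0$ and $v_{s+1}$, the two endpoints of the backbone; what is left is the subpath $v_1, v_2, \ldots, v_s$, which is a Hamiltonian path on the $s$ remaining vertices with $v_1$ and $v_s$ as terminal vertices, as required.

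A slightly more intrinsic version of the argument avoids direct appeal to the figure by using degrees: each center $v_i$ with $1 \leq i \leq s$ has degree at most $k$ in $t$, of which exactly $k-2$ edges go to the internal vertices of $V_i$, so at most two edges of $t$ incident to $v_i$ leave $V_i$. Consequently every vertex of $t_h$ has degree at most $2$; combined with acyclicity (inherited from $t$) and the connectivity of $t_h$, this forces $t_h$ to be a simple path whose two degree-$1$ endpoints can only be $v_1$ and $v_s$. The only mildly delicate point is verifying that $t_h$ remains connected after $v_0$ and $v_{s+1}$ are removed, which is precisely guaranteed by the structural assumption, built into the special-form construction, that the backbone of $t$ is a path with $v_0$ and $v_{s+1}$ at its endpoints.
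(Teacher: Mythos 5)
There is a genuine gap, and it concerns exactly the point the lemma is supposed to establish: that the terminal vertices of $t_{h}$ are $v_{1}$ and $v_{s}$. Your first argument declares that, ``by the construction depicted in Fig.~\ref{Fig_2}'', the backbone on $\{v_{0},v_{1},\ldots,v_{s+1}\}$ is precisely the path $v_{0},v_{1},\ldots,v_{s},v_{s+1}$ in this order. But the construction only fixes which vertices are centers, that each $v_{i,j}$ is joined solely to $v_{i}$, and that $v_{0}$ and $v_{s+1}$ are attached to $v_{1}$ and $v_{s}$; the order in which $v_{2},\ldots,v_{s-1}$ occur along the backbone is \emph{not} fixed. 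Indeed, the whole point of the collection $T_{k}$ introduced right after the lemma is that its trees are in bijection with \emph{all} Hamiltonian paths from $v_{1}$ to $v_{s}$ through $\{v_{2},\ldots,v_{s-1}\}$; if the backbone order were part of the construction, $T_{k}$ would consist of a single tree and the reduction via $HP_{1s}$ (and the ensuing NP-completeness and clique-number results) would be vacuous. So the first version assumes what has to be proved.

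Your ``intrinsic'' version is much closer to the paper's proof, which is a pure degree count, but it stops one step short. From $\deg_{t}(v_{i})\leq k$ and the $k-2$ pendant edges at each center you correctly get $\deg_{t_{h}}(v_{i})\leq 2$ for all $i$, so $t_{h}$ is a disjoint union of simple paths; but this alone does not determine \emph{which} two centers are the endpoints. The missing ingredient, which is the substance of the paper's argument, is that $\deg_{t}(v_{1})$ and $\deg_{t}(v_{s})$ are at least $k-1$ (their $k-2$ pendant vertices plus the fixed edges $(v_{0},v_{1})$ and $(v_{s+1},v_{s})$), hence $v_{1}$ and $v_{s}$ can have at most one neighbour among $v_{1},\ldots,v_{s}$ and are forced to be the two terminal vertices of the path. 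Finally, your resolution of the connectivity issue again invokes ``the backbone of $t$ is a path with $v_{0}$ and $v_{s+1}$ at its endpoints'', i.e.\ essentially the conclusion. What should be said instead is that the $v_{i,j}$ are leaves of $t$, so deleting them leaves a tree on $\{v_{0},v_{1},\ldots,v_{s+1}\}$ in which $v_{0}$ and $v_{s+1}$ are themselves leaves (each adjacent only to $v_{1}$, respectively $v_{s}$), and deleting a leaf of a tree again yields a tree; hence $t_{h}$ is connected and acyclic, and the degree bounds above then force it to be a Hamiltonian path from $v_{1}$ to $v_{s}$.
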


\begin {proof}
By construction, the degrees of the vertices $v_{1}$ and $v_{s}$ cannot be less than $k-1$, and the degrees of the vertices $\{v_{2},\ldots, v_{s-1}\}$ less than $k-2$.
Since the degree of each vertex in the tree $t$ cannot exceed $k$, the graph obtained after discarding corresponding vertices can only be a Hamiltonian path between $v_{1}$ and $v_{s}$.
\end {proof}

We consider the collection $T_{k}$ of all spanning trees of the type described above.
By Lemma \ref {lemma_discarding_vertices_HP_DC}, every tree of $T_{k}$ contains a path $t_{h}$ with terminal vertices $v_{1}$ and $v_{s}$ passing through vertices $\{v_{2},\ldots, v_{s-1}\}$.
The converse is also true: for every such path, there is a tree of $T_{k}$.
We denote by $HP_{1s}$ the convex hull of the characteristic vectors of the Hamiltonian paths between the vertices $v_{1}$ and $v_{s}$.

\begin {lemma}
Vertices $x$ and $y$ of the polytope $DCMST_{n,k}$ that correspond to the trees of $T_{k}$ are nonadjacent if and only if the corresponding vertices $x_{h}$ and $y_{h}$ of $HP_{1s}$ are nonadjacent as well.
\end {lemma}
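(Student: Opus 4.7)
The plan is to mirror the proof of Lemma~\ref{lemma_LCMST_adjacency}, exploiting the bijection between $T_k$ and Hamiltonian $v_1$--$v_s$ paths provided by Lemma~\ref{lemma_discarding_vertices_HP_DC}.

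For the easy direction, I assume $x_h$ and $y_h$ are nonadjacent in $HP_{1s}$. By Lemma~\ref{lemma_adjacency_separation}, I obtain nonnegative scalars $\alpha,\beta,\gamma_z$ with $\alpha+\beta=\sum\gamma_z=1$ and $\alpha x_h + \beta y_h = \sum \gamma_z z_h$ over vertices $z_h$ of $HP_{1s}$. The coordinates corresponding to the fixed cluster edges $(v_i,v_{i,j})$, $(v_0,v_{0,j})$, $(v_{s+1},v_{s+1,j})$ and to the connectors $(v_0,v_1)$, $(v_s,v_{s+1})$ equal $1$ on every tree of $T_k$; appending these coordinate equalities turns the relation into $\alpha x + \beta y = \sum \gamma_z z$ with $z\in T_k$, and $x,y$ are nonadjacent in $DCMST_{n,k}$.

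For the reverse direction, suppose $\alpha x + \beta y = \sum \gamma_z z$ where $z$ ranges over vertices of $DCMST_{n,k}$. The crucial task is to show that every $z$ with $\gamma_z>0$ actually belongs to $T_k$; once this is done, the bijection converts the equation into $\alpha x_h + \beta y_h = \sum \gamma_z z_h$ in $HP_{1s}$. To force $z\in T_k$ I examine two families of coordinates. The fixed $T_k$-edges have coordinate $1$ on both $x$ and $y$, hence on each $z$, so every such $z$ contains these edges. On the other hand, the coordinates of edges incident to any cluster-leaf $v_{i,j}$, $v_{0,j}$, or $v_{s+1,j}$ other than its unique $T_k$-edge, as well as coordinates of edges at $v_0$ or $v_{s+1}$ outside their own clusters and the connectors, are all $0$ on $x$ and $y$, hence must vanish on each $z$. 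Thus the cluster-leaves remain degree-$1$ vertices in $z$, and the hubs $v_0,v_{s+1}$ carry exactly their $T_k$-edges.

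The remaining freedom in $z$ therefore lies only among the backbone vertices $\{v_1,\dots,v_s\}$, and since $z$ has maximum degree at most $k$ the leftover degree allowance is at most $1$ at $v_1$ and $v_s$ and at most $2$ at each interior $v_i$. The $s-1$ extra edges must join the $s$ star-components into a spanning tree, which on $s$ vertices with the stated degree caps forces a Hamiltonian path between $v_1$ and $v_s$; hence $z\in T_k$. The main obstacle, absent from Lemma~\ref{lemma_LCMST_adjacency}, is precisely this last step: an arbitrary spanning tree of maximum degree $k$ need not have the $v_{i,j}$ as leaves a priori, and one must combine the one-coordinates (forcing the cluster edges to be present) with the zero-coordinates (forbidding extra edges at leaves and hubs) before a degree-count on the backbone recovers the Hamiltonian-path structure.
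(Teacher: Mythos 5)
Your proof is correct and follows essentially the same route as the paper, which simply transfers the argument of Lemma~\ref{lemma_LCMST_adjacency} to the degree-constrained setting via the bijection of Lemma~\ref{lemma_discarding_vertices_HP_DC}. You in fact spell out more carefully than the paper the step forcing every $z$ in the support to lie in $T_{k}$ (combining the forced one- and zero-coordinates with the degree count on the backbone), which the paper leaves implicit in its ``the proof is similar'' remark.
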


The proof is similar to the proof of Lemma \ref {lemma_LCMST_adjacency}. As a corollary, we obtain the following assertions.

\begin {theorem}
The question whether two vertices of $DCMST_{n,k}$ are nonadjacent is NP-complete.
\end {theorem}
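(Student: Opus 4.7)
The plan is to mirror the argument already used for $LCMST_{n,k}$, reducing the adjacency question on $DCMST_{n,k}$ to the (known NP-complete) adjacency question on the traveling salesman polytope. By the lemma just stated, for any two spanning trees $x,y \in T_k$ of the special form constructed above Lemma~\ref{lemma_discarding_vertices_HP_DC}, the vertices $x,y$ of $DCMST_{n,k}$ are nonadjacent if and only if the associated Hamiltonian path vertices $x_h,y_h$ of $HP_{1s}$ are nonadjacent. So the first step is to package this equivalence into a polynomial-time reduction.

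Next, I would bridge $HP_{1s}$ and $TSP_{n}$ using the simple observation already invoked after Lemma~\ref{lemma_LCMST_adjacency}: two vertices of $HP_{1s}$ are adjacent if and only if the corresponding vertices of the traveling salesman polytope, obtained by identifying the terminal vertices $v_1$ and $v_s$ into a single vertex (so that each Hamiltonian $v_1$--$v_s$ path becomes a Hamiltonian cycle), are adjacent. This gives a polynomial-time reduction from TSP-adjacency on $s-1$ vertices (with $s = \lfloor (n-2)/(k-1) \rfloor$) to $HP_{1s}$-adjacency, and thus, composed with the previous step, to $DCMST_{n,k}$-adjacency.

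To finish, I would invoke Papadimitriou's theorem that deciding nonadjacency of two vertices of $TSP_n$ is NP-complete. Combining the two reductions yields that deciding nonadjacency of two vertices of $DCMST_{n,k}$ is NP-hard. Membership in NP is immediate from Lemma~\ref{lemma_adjacency_separation}: a nonadjacency certificate consists of the coefficients $\alpha,\beta,\gamma_z$ and the list of vertices $z$ participating in the convex combination, which can be verified in polynomial time (standard for this class of polytope-adjacency problems). Together these give NP-completeness.

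The main obstacle I anticipate is a technical one rather than conceptual: one must check that the parameter $s$ grows polynomially with $n$ for fixed $k \geq 2$ (which is immediate from $s = \lfloor (n-2)/(k-1)\rfloor$) so that the reduction from $TSP$ on $s-1$ vertices to $DCMST_{n,k}$ is genuinely polynomial, and one must handle degenerate small cases $k=1$ or $n\leq 2$ separately. Beyond that, the argument is a direct transcription of the $LCMST_{n,k}$ proof with Lemma~\ref{lemma_discarding_vertices_HP_DC} and the preceding adjacency lemma playing the roles of Lemmas~\ref{lemma_discarding_vertices_HP_LC} and~\ref{lemma_LCMST_adjacency}.
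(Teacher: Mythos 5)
Your proposal is correct and follows essentially the same route as the paper, which derives this theorem as a corollary of the $DCMST_{n,k}$ adjacency lemma (proved analogously to Lemma~\ref{lemma_LCMST_adjacency}), the identification of Hamiltonian $v_1$--$v_s$ paths with Hamiltonian cycles obtained by merging the endpoints, and Papadimitriou's NP-completeness result for the traveling salesman polytope. Your added remarks on NP membership and on the polynomial growth of $s$ are sensible housekeeping that the paper leaves implicit.
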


\begin {theorem}
The clique number of $DCMST_{n,k}$ skeleton is superpolynomial in $s$:
$$\omega(DCST_{n,k}) \geq 2^{(\sqrt {\left\lfloor \frac {s-1}{2} \right\rfloor} - 9) \slash 2}.$$
\end {theorem}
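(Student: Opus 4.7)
The argument parallels the one used for Theorem~\ref{theorem_LC_clique}: the plan is to pull back a large clique from $TSP_{s-1}$ through $HP_{1s}$ into the skeleton of $DCMST_{n,k}$, and then substitute the known superpolynomial lower bound on $\omega(TSP_{s-1})$.

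First I would invoke the lemma just proved: on the collection $T_{k}$ of special spanning trees, adjacency in $DCMST_{n,k}$ coincides with adjacency in $HP_{1s}$, so any clique among vertices of $HP_{1s}$ lifts to a clique of the same size in the skeleton of $DCMST_{n,k}$, and $\omega(DCMST_{n,k}) \geq \omega(HP_{1s})$. Second, as the authors observe after Lemma~\ref{lemma_LCMST_adjacency}, merging the two fixed endpoints $v_{1}$ and $v_{s}$ into a single vertex sets up a bijection from Hamiltonian $(v_{1},v_{s})$-paths on $\{v_{1},\ldots,v_{s}\}$ to Hamiltonian cycles on $s-1$ vertices, and this bijection is adjacency-preserving (Lemma~\ref{lemma_adjacency_separation} applies verbatim after the coordinate identification). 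Hence $\omega(HP_{1s}) \geq \omega(TSP_{s-1})$, and chaining with the bound $\omega(TSP_{s-1}) \geq 2^{(\sqrt{\lfloor (s-1)/2 \rfloor}-9)/2}$ from~\cite{Bondarenko-DAN,Bondarenko-Maximenko} yields the theorem.

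The only step that requires real care is the first, and it is structurally identical to the LCMST case. For any two vertices $x,y$ of $DCMST_{n,k}$ corresponding to trees of $T_{k}$ and any convex combination $\alpha x + \beta y = \sum \gamma_{z} z$ witnessing their nonadjacency, one must show that the $z$ may be taken in (the image of) $T_{k}$, so that projecting onto the coordinates of the non-fixed edges yields a valid separation in $HP_{1s}$. This works because the coordinates corresponding to the edges $(v_{i,j},v_{i})$ and to the edges incident to $v_{0}$ and $v_{s+1}$ are constant across $T_{k}$, and the convex combination equality forces the same values at every $z$; the remainder is a mechanical coordinate rewrite. I do not expect this to require any ideas beyond those already used in the proof of Lemma~\ref{lemma_LCMST_adjacency}, which is why the authors are content to declare the proof ``similar''.
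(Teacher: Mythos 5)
Your proposal is correct and follows the paper's own route exactly: apply the adjacency lemma for $T_{k}$ to get $\omega(DCMST_{n,k}) \geq \omega(HP_{1s})$, identify $HP_{1s}$ with the traveling salesman polytope on $s-1$ vertices by merging the endpoints $v_{1}$ and $v_{s}$, and substitute the known bound $\omega(TSP_{s-1}) \geq 2^{(\sqrt{\lfloor (s-1)/2 \rfloor}-9)/2}$. The only cosmetic remark is that the coordinates forced to be constant across $T_{k}$ are those of \emph{all} edges incident to the discarded vertices $v_{i,j}$, $v_{0}$, $v_{s+1}$ (the fixed pendant edges at value $1$ and the remaining such edges at value $0$), which is exactly the level of detail the paper itself uses in the proof of Lemma~\ref{lemma_LCMST_adjacency}.
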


\section {Conclusion}

Thus, the general minimum spanning tree problem and the problems with additional constraints on the number of leaves and the degree of vertices have fundamentally different polyhedral characteristics.
For the classical problem: polynomial algorithms are known, a complete $H$-representation of a polytope with a polynomial number of inequalities is constructed,
$1$-skeleton of the polytope is completely described, and it is established that its clique number is polynomial in dimension.
At the same time, the problems with additional constraints are intractable, complete $H$-representations of the polytopes have not been found, $1$-skeletons of the polytopes are extremely complex: even the vertex adjacency test is an NP-complete problem, the clique numbers are superpolynomial in dimension.

\renewcommand{\refname}{References}
{
	}
\medskip

\begin{thebibliography}{99}
		
		\bibitem {Belov}
		\by Belov Y.\,A.
		\paper O plotnosti grafa matroida (On clique number of matroid skeleton)
		\inbook Modeli issledovanija operacij v vychislitel'nyh sistemah (Models of operations research in computer systems)
		\publaddr Yaroslavl
		\publ Yaroslavl state university
		\yr 1985
		\pages 95--100
		\finalinfo (in Russian)
		
		
		\bibitem {Bondarenko-DAN}
		\by Bondarenko V.\,A.
		\paper Complexity bounds for combinatorial optimization problems in one class of algorithms
		\jour Russian Academy of Sciences Doklady Mathematics
		\yr 1993
		\vol  328
		\issue 1
		\pages 22--24
		\finalinfo (in Russian)
		
		
		\bibitem {Bondarenko-Maximenko}
		\by Bondarenko V.\,A., Maksimenko A.\,N.
		\book Geometricheskie konstruktsii i slozhnost' v kombinatornoy optimizatsii (Geometric Constructs and Complexity in Combinatorial Optimization)
		\publaddr Moscow
		\publ LKI
		\yr 2008
		\finalinfo (in Russian)
		
		
		\bibitem {Bondarenko-Nikolaev}
		\by Bondarenko V.\,A., Nikolaev A.\,V.
		\paper Combinatorial and geometric properties of the Max-Cut and Min-Cut problems
		\jour Doklady Mathematics
		\yr 2013
		\vol  88
		\issue 2
		\pages 516--517
		
		
		\bibitem {Bron}
		\by Brondsted A.
		\book An Introduction to Convex Polytopes
		\publ Springer-Verlag
		\yr 1983
		
		
		\bibitem{Fernandes}
		\by Fernandes L.\,M., Gouveia L.
		\paper Minimal spanning trees with a constraint on the number of leaves
		\jour European Journal of Operational Research
		\yr 1998
		\vol 104
		\issue 1
		\pages 250--261
		
		
		\bibitem{Fujie}
		\by Fujie T.
		\paper The maximum-leaf spanning tree problem: Formulations and facets
		\jour Networks
		\yr 2004
		\vol 43
		\issue 4
		\pages 212--223
		
		
		\bibitem {Garey}
		\by Garey M.\,R., Johnson D.\,S.
		\book Computers and Intractability: A Guide to the Theory of NP-Completeness
		\publaddr New York, NY, USA
		\publ W. H. Freeman \& Co.
		\yr 1979
		
		
		\bibitem{Goemans}
		\by Goemans M.\,X.
		\paper Minimum Bounded-Degree Spanning Trees
		\proc Proceedings of the 47th Annual IEEE Symposium on Foundations of Computer Science
		\yr 2006
		\pages 273--282
		
		
		\bibitem{Martin}
		\by Martin R.\,K.
		\paper Using separation algorithms to generate mixed integer model reformulations
		\jour Operations Research Letters
		\yr 1991
		\vol 10
		\issue 3
		\pages 119--128
		
		
		\bibitem{Papa}
		\by Papadimitriou C.\,H.
		\paper The Adjacency Relation on the Traveling Salesman Polytope is NP-Complete
		\jour Mathematical Programming
		\yr 1978
		\vol 14
		\issue 1
		\pages 312--324
		
		
		\bibitem {Papa-St}
		\by Papadimitriou C.\,H., Steiglitz K.
		\book Combinatorial Optimization: Algorithms and Complexity
		\publaddr Upper Saddle River, NJ, USA
		\publ Prentice-Hall, Inc.
		\yr 1982
		
		
		
		\bibitem{Rahman}
		\by Rahman M.\,S., Kaykobad M.
		\paper Complexities of some interesting problems on spanning trees
		\jour Information Processing Letters
		\yr 2005
		\vol 94
		\issue 2
		\pages 93--97
		
		
		\bibitem{Salamon}
		\by Salamon G., Wiener G.
		\paper On finding spanning trees with few leaves
		\jour Information Processing Letters
		\yr 2008
		\vol 105
		\issue 5
		\pages 164--169
		
		
		\bibitem{Singh}
		\by Singh M., Lau L.\,C.
		\paper Approximating minimum bounded degree spanning trees to within one of optimal
		\proc Proceedings of the Thirty-ninth Annual ACM Symposium on Theory of Computing
		\yr 2007
		\pages 661--670
		
\end{thebibliography}
\end{document}